\numberwithin{equation}{section}
\renewcommand{\Re}{\mathbb{R}}
\DeclarePairedDelimiter\abs{\lvert}{\rvert}%
\theoremstyle{definition}
\theoremstyle{theorem}
\newtheorem{lemma}{Lemma}
\newtheorem{theorem}{Theorem}
\newtheorem{corollary}{Corollary}
\theoremstyle{remark}
\def\csname ver@etex.sty\endcsname{3000/12/31}
\newcommand{\cB}{\mathcal{B}}
\newcommand{\cX}{X}
\newcommand{\cY}{Y}
\DeclarePairedDelimiter\norm{\lVert}{\rVert}
\DeclarePairedDelimiter\prn{(}{)}
\DeclarePairedDelimiterX\inner[2]{\langle}{\rangle}{{#1},{#2}}
\DeclarePairedDelimiterX\Set[2]{\{}{\}}{\mspace{2mu}{#1}\;\delimsize|\;{#2}\mspace{2mu}}
\newcommand{\by}[2][]{\text{\pbox[c]{\textwidth}{(by \pbox[t]{\textwidth}{\phantom{}#2)#1}}}}
\crefname{equation}{}{}
\Crefname{equation}{Eq.}{Eqs.}
\crefname{proposition}{Proposition}{Propositions}
\crefname{theorem}{Theorem}{Theorems}
\crefname{lemma}{Lemma}{Lemmas}
\title{On the equivalence of a Hessian-free inequality and Lipschitz continuous Hessian}
\author{
Radu I. Bo\c{t}\thanks{
	Faculty of Mathematics, University of Vienna, Oskar-Morgenstern-Platz 1, 1090 Vienna, Austria.
	E-mail: \href{radu.bot@univie.ac.at}{radu.bot@univie.ac.at}
	}
\and
Minh N. Dao\thanks{
School of Science, RMIT University, Melbourne, VIC 3000, Australia. 
E-mail: \href{minh.dao@rmit.edu.au}{minh.dao@rmit.edu.au}
}
\and
Tianxiang Liu\thanks{
	School of Computing, Institute of Science Tokyo, Japan. E-mail: \href{liu@c.titech.ac.jp}{liu@c.titech.ac.jp}
}
\and
Bruno F. Louren\c{c}o\thanks{Department of Statistical Inference and Mathematics, Institute of Statistical Mathematics, Japan.
	E-mail: \href{bruno@ism.ac.jp}{bruno@ism.ac.jp}}
\and
Naoki Marumo\thanks{
Graduate School of Information Science and Technology, University of Tokyo, Tokyo, Japan.
E-mail: \href{marumo@mist.i.u-tokyo.ac.jp}{marumo@mist.i.u-tokyo.ac.jp}
}
}
\begin{document}
	\maketitle
  
\begin{abstract}
  It is known that if a twice differentiable function has a Lipschitz continuous Hessian, then its gradients satisfy a Jensen-type inequality.
  In particular, this inequality is Hessian-free in the sense that the Hessian does not actually appear in the inequality.
  In this paper, we show that the converse holds in a generalized setting: if a continuous function from a Hilbert space to a reflexive Banach space satisfies such an inequality, then it is Fr\'echet differentiable and its derivative is Lipschitz continuous.
  Our proof relies on the Baillon--Haddad theorem.  
\end{abstract}

\section{Introduction}

Carmon et al.~\cite{carmon2017convex} proposed a first-order method for minimizing nonconvex functions having Lipschitz continuous gradients and Hessians.
The idea is that even if the Hessian is not actually used, its mere Lipschitz continuity makes it possible to design faster first-order methods than what would be possible if the function only had Lipschitz continuous gradients.
In this vein, several algorithms have been proposed that offer theoretical or practical improvements (e.g., \cite{allen2018neon2,jin2018accelerated,li2023restarted,marumo2024parameter,marumo2024universal,xu2017neon+}).
Among these, Marumo and Takeda~\cite{marumo2024parameter} proposed a first-order method that does not require the Lipschitz constant of the Hessian as an input of the algorithm, unlike previous methods.
An important step in their analysis is establishing the following Jensen-type inequality.

\begin{lemma}[Hessian-free inequality {\cite[Lemma 3.1]{marumo2024parameter}}]\label{lem:hess_free}
  Let $f \colon \Re^d \to \Re$ be a twice differentiable function with $L$-Lipschitz continuous Hessian.
  Then, for any $x_1,\ldots, x_n \in \Re^d$ and $\lambda_1,\ldots, \lambda_n \geq 0$ such that $\sum _{i=1}^n \lambda_i = 1$, we have
  \begin{equation}\label{eq:hess_free}
    \norm*{
      \nabla f \prn*{\sum_{i=1}^n \lambda_i x_i }
      - \sum_{i=1}^n \lambda_i \nabla f(x_i)
    }
    \leq
    \frac{L}{2} \sum_{1 \leq i < j \leq n} \lambda_i \lambda_j\norm{x_i-x_j}^2.
  \end{equation}
\end{lemma}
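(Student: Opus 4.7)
The plan is to combine the standard second-order Taylor estimate that follows from $L$-Lipschitz continuity of $\nabla^2 f$ with a variance-type algebraic identity. Concretely, from $L$-Lipschitz continuity of the Hessian one obtains
\begin{equation*}
\norm*{\nabla f(y) - \nabla f(x) - \nabla^2 f(x)(y-x)} \le \frac{L}{2}\norm{y-x}^2 \quad \text{for all } x,y \in \Re^d,
\end{equation*}
which will be the sole analytic input.

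First, I set $\bar x := \sum_{i=1}^n \lambda_i x_i$ and apply the estimate above at each $x_i$, expanding around the barycenter $\bar x$. Then I take the convex combination with weights $\lambda_i$ and use the triangle inequality for $\norm{\cdot}$. The crucial observation is that $\sum_{i=1}^n \lambda_i (x_i - \bar x) = 0$, so the linear Hessian term vanishes identically and I am left with
\begin{equation*}
\norm*{\nabla f(\bar x) - \sum_{i=1}^n \lambda_i \nabla f(x_i)} \le \frac{L}{2}\sum_{i=1}^n \lambda_i \norm{x_i - \bar x}^2.
\end{equation*}

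The remaining task is purely algebraic. Expanding $\norm{x_i - x_j}^2$ via the inner product, summing over all ordered pairs, and using $\sum_i \lambda_i = 1$ gives the variance identity
\begin{equation*}
\sum_{1 \le i < j \le n} \lambda_i \lambda_j \norm{x_i - x_j}^2 = \sum_{i=1}^n \lambda_i \norm{x_i - \bar x}^2,
\end{equation*}
which, substituted into the previous display, yields exactly \eqref{eq:hess_free}.

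I do not foresee a real obstacle: both ingredients are standard. The only conceptual point worth emphasizing is the choice to Taylor-expand around $\bar x$ rather than around one of the $x_i$'s; this is precisely what kills the first-order contribution and is what allows the Hessian itself to disappear from the bound, making the inequality ``Hessian-free.''
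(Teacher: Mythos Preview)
Your argument is correct. The paper does not supply its own proof of this lemma; it simply cites \cite[Lemma~3.1]{marumo2024parameter} and remarks that the proof there ``uses the fact that $f$ has a Lipschitz continuous Hessian'' and ``does not rely on the assumptions $X = Y = \Re^d$ and $F = \nabla f$.'' Your proof matches this description: the Taylor remainder estimate $\norm{\nabla f(y) - \nabla f(x) - \nabla^2 f(x)(y-x)} \le \tfrac{L}{2}\norm{y-x}^2$ holds verbatim with $\nabla f$ replaced by any Fr\'echet differentiable $F$ with $L$-Lipschitz derivative $F'$, the cancellation $\sum_i \lambda_i(x_i-\bar x)=0$ is norm-independent, and the variance identity $\sum_{i<j}\lambda_i\lambda_j\norm{x_i-x_j}^2 = \sum_i \lambda_i\norm{x_i-\bar x}^2$ only needs an inner product on $\cX$. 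So your proof is both correct and essentially the one alluded to in the paper.
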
 	
The proof of \cref{lem:hess_free}, naturally, uses the fact that $f$ has a Lipschitz continuous Hessian. However, the resulting inequality \eqref{eq:hess_free} is free of Hessians and only involves the gradient of $f$.

A natural question then is the following:
{\begin{quote}
If $f$ is continuously differentiable and satisfies \eqref{eq:hess_free} for some constant $L >0$,  is $f$ necessarily twice differentiable? If so,  is its Hessian $L$-Lipschitz continuous?
\end{quote}}
It turns out that as a consequence of the Baillon--Haddad Theorem the answer is yes.
This is somewhat surprising given the fact that \eqref{eq:hess_free} does not involve Hessians at all.
In fact, we will prove the following \cref{thm:hess_free_converse}, a generalization of the answer to the Hilbert space setting.

In what follows, given Banach spaces $\cX$ and $\cY$ we denote the space of bounded linear operators between $\cX$ and $\cY$ by $\cB(\cX,\cY)$.
Let $X^*$ denote the dual space of $X$, i.e., $X^* \coloneqq \cB(\cX, \Re)$.
Also, for simplicity, throughout the paper we use the same notation $\norm{\cdot}$ to indicate the norms on different Banach spaces.
We recall that operator norm is defined for $T \in \cB(\cX,\cY)$ by $\norm*{T} \coloneqq \sup_{x \in \cX,\, \norm*{x} \leq 1} \norm*{T(x)}$.

\begin{theorem}\label{thm:hess_free_converse}
  Let $\cX$ and $\cY$ be real Hilbert and reflexive Banach spaces, respectively.
  Let $F\colon \cX \to \cY$ be a continuous function and let $L > 0$.
  Then the following are equivalent:
  \begin{enumerate}[$(i)$]
    \item\label{item:lip_F_derivative}
    $F$ is Fr\'echet differentiable on $\cX$ and its derivative $F' \colon \cX \to \cB(\cX,\cY)$ is $L$-Lipschitz continuous, i.e.,
    \begin{align}\label{eq:lip_F_derivative}
      \norm*{F'(x) - F'(y)}
      \leq
      L \norm*{x - y} \quad \forall x,y \in \cX,
    \end{align}
    where the norm on the left-hand side is the operator norm.
    \item\label{item:grad_free}
    For any $x_1,\ldots, x_n \in \cX$ and $\lambda_1,\ldots, \lambda_n \geq 0$ such that $\sum _{i=1}^n \lambda_i = 1$, the following holds:
    \begin{equation}\label{eq:grad_free}
      \norm*{
        F \prn*{\sum_{i=1}^n \lambda_i x_i} - \sum_{i=1}^n \lambda_i F(x_i)
      }    \leq
      \frac{L}{2} \sum_{1 \leq i < j \leq n} \lambda_i \lambda_j \norm{x_i-x_j}^2.
    \end{equation}
  \end{enumerate}	
\end{theorem}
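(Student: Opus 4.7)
The forward direction $(i) \Rightarrow (ii)$ is a direct generalization of \cref{lem:hess_free} to the Banach-valued setting. Lipschitz continuity of $F'$ yields the Taylor-type estimate $\|F(y) - F(x) - F'(x)(y-x)\| \le \frac{L}{2}\|y-x\|^2$. Setting $\bar x := \sum_i \lambda_i x_i$, applying this estimate to each pair $(\bar x, x_i)$ and averaging with weights $\lambda_i$ kills the linear term since $\sum_i \lambda_i(x_i - \bar x) = 0$, leaving $\|\sum_i \lambda_i F(x_i) - F(\bar x)\| \le \frac{L}{2}\sum_i \lambda_i \|x_i - \bar x\|^2$. The Hilbert-space identity $\sum_i \lambda_i \|x_i - \bar x\|^2 = \sum_{i < j} \lambda_i \lambda_j \|x_i - x_j\|^2$ finishes this implication.

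For $(ii) \Rightarrow (i)$, I would first handle the scalar case $\cY = \Re$. Using only $n = 2$ in \eqref{eq:grad_free}, together with $\lambda\|x_1\|^2 + (1-\lambda)\|x_2\|^2 - \|\lambda x_1 + (1-\lambda)x_2\|^2 = \lambda(1-\lambda)\|x_1-x_2\|^2$, the inequality rewrites precisely as the convexity of both $\phi := F + \frac{L}{2}\|\cdot\|^2$ and $\psi := \frac{L}{2}\|\cdot\|^2 - F$. These are continuous convex functions on the Hilbert space $\cX$, hence have nonempty subdifferentials everywhere. Since $\phi + \psi = L\|\cdot\|^2$ is Fr\'echet differentiable with $\partial(\phi+\psi)(x) = \{2Lx\}$, the inclusion $\partial\phi(x) + \partial\psi(x) \subseteq \partial(\phi+\psi)(x)$ forces $\partial\phi(x)$ and $\partial\psi(x)$ to be singletons at every $x$. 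Writing $\{v_x\} = \partial\phi(x)$, combining the subgradient inequality for $\phi$ with the one for $\psi$ (whose subgradient at $x$ must then be $2Lx - v_x$) gives $0 \le \phi(x+h) - \phi(x) - \langle v_x, h\rangle \le L\|h\|^2 = o(\|h\|)$, so $\phi$, and hence $F = \phi - \frac{L}{2}\|\cdot\|^2$, is Fr\'echet differentiable. For the Lipschitz bound, since $\phi$ is convex Fr\'echet differentiable and $L\|\cdot\|^2 - \phi = \psi$ is convex, the Baillon--Haddad theorem yields that $\nabla\phi$ is $2L$-Lipschitz and $\frac{1}{2L}$-cocoercive. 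Then $\nabla F(x) - \nabla F(y) = \nabla\phi(x) - \nabla\phi(y) - L(x-y)$, and expanding $\|\nabla F(x) - \nabla F(y)\|^2$ and applying cocoercivity exactly cancels the cross term, leaving $\|\nabla F(x) - \nabla F(y)\|^2 \le L^2\|x-y\|^2$.

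For general reflexive $\cY$, I would reduce to the scalar case by duality. For each $y^* \in \cY^*$ the continuous scalar function $f_{y^*} := y^* \circ F$ satisfies \eqref{eq:grad_free} with constant $L\|y^*\|$, so by the scalar case it is Fr\'echet differentiable with $L\|y^*\|$-Lipschitz gradient. The assignment $T_x(y^*) := \nabla f_{y^*}(x) \in \cX$ is linear in $y^*$, and its boundedness as a map $\cY^* \to \cX$ follows from the closed graph theorem: if $y_n^* \to y^*$ in norm and $\nabla f_{y_n^*}(x) \to z$, passing to the limit in $|f_{y_n^*}(x+h) - f_{y_n^*}(x) - \langle\nabla f_{y_n^*}(x), h\rangle| \le \frac{L\|y_n^*\|}{2}\|h\|^2$ and invoking uniqueness of the Fr\'echet derivative of $f_{y^*}$ forces $z = \nabla f_{y^*}(x) = T_x(y^*)$. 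Reflexivity of $\cY$ together with $\cX^* = \cX$ then identifies $T_x$ with the adjoint of a unique $F'(x) \in \cB(\cX, \cY)$. Taking $\sup_{\|y^*\| \le 1}$ of the scalar estimates via Hahn--Banach yields both $\|F(x+h) - F(x) - F'(x)(h)\| \le \frac{L}{2}\|h\|^2$ (Fr\'echet differentiability of $F$) and the operator-norm bound $\|F'(x) - F'(y)\| \le L\|x-y\|$. I expect the main obstacle to be this Banach-to-scalar reduction --- specifically the closed-graph boundedness of $T_x$ and the correct use of reflexivity to realize $T_x$ as an adjoint in $\cB(\cX, \cY)$.
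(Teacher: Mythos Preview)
Your plan is correct and follows essentially the same route as the paper: reduce to the scalar case by showing that $\tfrac{L}{2}\|\cdot\|^2 \pm F$ are convex and invoking the Baillon--Haddad theorem to get Fr\'echet differentiability and the sharp $L$-Lipschitz bound via cocoercivity, then lift to general reflexive $\cY$ by slicing with $y^* \in \cY^*$, using reflexivity to assemble the derivative, and Hahn--Banach to transfer the scalar estimates to operator norms. The only deviations are cosmetic: you prove Fr\'echet differentiability of $\phi$ by a direct sandwich argument before applying Baillon--Haddad (the paper extracts both from the enhanced Baillon--Haddad statement at once), and you establish boundedness of $y^* \mapsto \nabla f_{y^*}(x)$ via the closed graph theorem, whereas the paper gives a direct bound using continuity of $F$ at $x$.
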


The proof of $\eqref{item:lip_F_derivative} \implies \eqref{item:grad_free}$ is essentially the same as that of \cref{lem:hess_free}, since the proof in \cite{marumo2024parameter} does not rely on the assumptions $X = Y = \Re^d$ and $F = \nabla f$.
In this paper, we focus on the converse implication $\eqref{item:grad_free} \implies \eqref{item:lip_F_derivative}$.

Setting $\cX = \cY = \Re^d$ and $F = \nabla f$ establishes the converse of \cref{lem:hess_free}. 
Slightly more generally, the following result holds.
\begin{corollary}
  Let $X$ be a real Hilbert space and let $f \colon \cX \to \Re$ be a Fr\'echet differentiable function such that its gradient $\nabla f$ satisfies \eqref{eq:hess_free} for 
  every $x_1,\ldots, x_n \in \cX$ and $\lambda_1,\ldots, \lambda_n \geq 0$ such that $\sum _{i=1}^n \lambda_i = 1$.
  Then, $f$ is twice differentiable with $L$-Lipschitz continuous Hessian.
\end{corollary}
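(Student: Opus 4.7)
The plan is to deduce the corollary as an immediate application of \cref{thm:hess_free_converse}: set $\cY := \cX$ (which is reflexive since it is a Hilbert space) and $F := \nabla f \colon \cX \to \cX$. With this identification, the assumption that $\nabla f$ satisfies \eqref{eq:hess_free} is exactly item~$(ii)$ of the theorem, and item~$(i)$ reads: $\nabla f$ is Fr\'echet differentiable on $\cX$ with $L$-Lipschitz continuous derivative. Since $\nabla^2 f$ is by definition the Fr\'echet derivative of $\nabla f$, and its operator-norm Lipschitz constant is what is meant by the Lipschitz constant of the Hessian, item~$(i)$ is exactly the conclusion of the corollary.

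Before invoking the theorem, one must verify its continuity hypothesis on $F = \nabla f$. I would argue this by scalarization: for every fixed $v \in \cX$, applying the Cauchy--Schwarz inequality to \eqref{eq:hess_free} shows that the scalar function $F_v(x) := \inner{v}{\nabla f(x)}$ satisfies the scalar $n=2$ version of \eqref{eq:grad_free} with constant $L\norm{v}$. Via the elementary identity $\sum_i \lambda_i \norm{x_i}^2 - \norm*{\sum_i \lambda_i x_i}^2 = \sum_{i<j}\lambda_i\lambda_j\norm{x_i - x_j}^2$, this two-point inequality is equivalent to the convexity of $F_v + \tfrac{L\norm{v}}{2}\norm{\cdot}^2$ on $\cX$. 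In the finite-dimensional case that motivates \cref{lem:hess_free}, a real-valued convex function with full domain on $\Re^d$ is automatically continuous, so every $F_v$ is continuous and hence $\nabla f$ is continuous; in the general Hilbert setting, promoting this to norm continuity of $\nabla f$ requires additional standard arguments combining local boundedness of convex functions with the Fr\'echet differentiability of $f$.

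Once the continuity of $\nabla f$ is established, \cref{thm:hess_free_converse} applies directly to yield the corollary. The principal obstacle is therefore the continuity verification in the infinite-dimensional case; in the finite-dimensional setting the proof collapses to a purely notational translation between the statement of the theorem for $F$ and the statement of the corollary for $f$.
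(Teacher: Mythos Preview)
Your approach is exactly the paper's: the corollary is presented there as an immediate specialization of \cref{thm:hess_free_converse} with $\cY=\cX$ and $F=\nabla f$, with no separate proof given. You are in fact more careful than the paper in flagging that the theorem's continuity hypothesis on $F=\nabla f$ is not literally among the corollary's assumptions, and your convexity-based argument for it is correct in finite dimensions; the paper does not comment on this point at all.
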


\section{Proof of the Theorem}
We start with a result that is contained in the enhanced version of the Baillon--Haddad Theorem described by Bauschke and Combettes in \cite{BC10}.
\begin{theorem}[A piece of the Baillon--Haddad Theorem {\cite[Theorem~2.1]{BC10}}]\label{theo:bh}
  Let $\cX$ be a real Hilbert space.
  Let $\beta > 0$ and suppose that $g \colon \cX \to \Re\cup\{+\infty\}$ is a proper,  convex and lower semicontinuous function.
  Then the following are equivalent:
  \begin{enumerate}[$(i)$]
    \item $g$ takes only real values,  it is Fr\'echet differentiable on $\cX$ and its gradient $\nabla g \colon \cX \to \cX$ is $\frac{1}{\beta}$-cocoercive\footnote{$G \colon \cX \to \cX$ is $\frac{1}{\beta}$-cocoercive if $\inner*{G(x) - G(y)}{x - y} \geq \frac{1}{\beta}\norm*{G(x) - G(y)}^2$ for all $x, y \in \cX$.};
    \item $\frac{\beta}{2} \norm*{\cdot}^2 - g$ is convex.
  \end{enumerate}	
\end{theorem}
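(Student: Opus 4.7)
The plan is to handle the two implications separately using standard convex-analytic tools; throughout, I will write $h \coloneqq \frac{\beta}{2}\norm*{\cdot}^2 - g$.

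For $(i) \implies (ii)$, since $g$ is real-valued and Fr\'echet differentiable, so is $h$, with $\nabla h(x) = \beta x - \nabla g(x)$, and convexity of $h$ is equivalent to monotonicity of $\nabla h$. The key computation is
\begin{equation*}
  \inner*{\nabla h(x) - \nabla h(y)}{x - y} = \beta \norm*{x-y}^2 - \inner*{\nabla g(x) - \nabla g(y)}{x - y}.
\end{equation*}
The cocoercivity assumption combined with Cauchy--Schwarz forces $\nabla g$ to be $\beta$-Lipschitz via the two-step chain $\norm*{\nabla g(x) - \nabla g(y)}^2 \leq \beta \inner*{\nabla g(x) - \nabla g(y)}{x - y} \leq \beta \norm*{\nabla g(x) - \nabla g(y)} \norm*{x - y}$, so another application of Cauchy--Schwarz gives $\inner*{\nabla g(x) - \nabla g(y)}{x - y} \leq \beta \norm*{x-y}^2$, whence $\inner*{\nabla h(x) - \nabla h(y)}{x - y} \geq 0$.

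For $(ii) \implies (i)$, convexity of $h$ forbids $h$ from taking the value $-\infty$; combined with $h = \frac{\beta}{2}\norm*{\cdot}^2 - g$ this forces $g$ to be real-valued. A proper, lower semicontinuous convex function that is finite everywhere on a Hilbert space is continuous, so both $g$ and $h$ are continuous convex real-valued functions on $\cX$. The Moreau--Rockafellar sum rule, applicable because $h$ is continuous, combined with the identity $g + h = \frac{\beta}{2}\norm*{\cdot}^2$, yields $\partial g(x) + \partial h(x) = \{\beta x\}$ for every $x$; in particular $\partial g(x) \neq \emptyset$. Fixing any $p \in \partial g(x)$ and inserting $\beta x - p \in \partial h(x)$ into the subgradient inequality $h(y) \geq h(x) + \inner*{\beta x - p}{y - x}$ gives, after rearrangement, the quadratic upper bound
\begin{equation*}
  g(y) \leq g(x) + \inner*{p}{y - x} + \frac{\beta}{2}\norm*{y - x}^2.
\end{equation*}
Together with the convex lower bound $g(y) \geq g(x) + \inner*{p}{y - x}$, this sandwich shows $\abs*{g(y) - g(x) - \inner*{p}{y - x}} \leq \frac{\beta}{2}\norm*{y - x}^2$, proving that $p$ is the Fr\'echet derivative of $g$ at $x$; in particular $\partial g$ is single-valued and coincides with $\nabla g$.

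The cocoercivity of $\nabla g$ follows from the standard gradient-step argument: fix $x \in \cX$, set $\phi_x(z) \coloneqq g(z) - \inner*{\nabla g(x)}{z}$, a convex function minimized at $z = x$ that inherits the same $\beta$-quadratic upper bound, and evaluate that upper bound at $z = y - \frac{1}{\beta}\nabla \phi_x(y)$ to obtain
\begin{equation*}
  g(y) \geq g(x) + \inner*{\nabla g(x)}{y - x} + \frac{1}{2\beta}\norm*{\nabla g(y) - \nabla g(x)}^2.
\end{equation*}
Summing this with its $x \leftrightarrow y$ counterpart cancels the function values and yields the desired cocoercivity. The main subtlety is the careful invocation of the convex-analytic background facts above, namely continuity of finite-valued lsc convex functions on a Hilbert space, nonemptiness of the subdifferential at every point, and the sum rule for continuous convex functions; all are standard and can be quoted from the Bauschke--Combettes monograph.
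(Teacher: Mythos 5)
The paper does not actually prove this statement; it is invoked as a cited result from Bauschke and Combettes~\cite{BC10}, so there is no in-paper proof to compare against. Your proposal is a self-contained proof, and it is correct. The route you take is a standard one for this piece of Baillon--Haddad: for $(i)\implies(ii)$ you upgrade cocoercivity to $\beta$-Lipschitz continuity of $\nabla g$ via Cauchy--Schwarz and then read off monotonicity of $\nabla(\tfrac{\beta}{2}\norm{\cdot}^2 - g)$; for $(ii)\implies(i)$ you use the Moreau--Rockafellar sum rule applied to $g + h = \tfrac{\beta}{2}\norm{\cdot}^2$ to obtain single-valuedness of $\partial g$, sandwich $g$ between its affine minorant and a $\beta$-quadratic majorant to get Fr\'echet differentiability, and finish with the classical gradient-step descent argument to recover cocoercivity. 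All the computations check out (the identity $\tfrac{\beta}{2}\norm{y}^2 - \tfrac{\beta}{2}\norm{x}^2 - \beta\inner*{x}{y-x} = \tfrac{\beta}{2}\norm{y-x}^2$, the minimization of $\phi_x$ at $x$, and the symmetrization step).

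One sentence deserves tightening: ``convexity of $h$ forbids $h$ from taking the value $-\infty$'' is not literally true of extended-real-valued convex functions in general. The correct reason $g$ must be real-valued here is that $h = \tfrac{\beta}{2}\norm{\cdot}^2 - g$ never takes $+\infty$ (since $g > -\infty$) and is finite at some point (since $g$ is proper); a convex function with these two properties cannot take $-\infty$ anywhere, as otherwise the midpoint inequality $h(\tfrac{1}{2}u + \tfrac{1}{2}v) \leq \tfrac{1}{2}h(u) + \tfrac{1}{2}h(v)$, with one of $u,v$ chosen so that $h = -\infty$ there, would force $h = -\infty$ at nearby points where it is known to be finite. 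Spelling out this argument (or simply citing the standard fact that a convex $h$ that is $<+\infty$ everywhere and $>-\infty$ somewhere is proper) would close that small gap. Everything else is sound, and the background facts you quote (continuity of finite-valued lsc convex functions on a Hilbert space, nonemptiness of the subdifferential for continuous convex functions, and the sum rule under a continuity qualification) are indeed standard and available in the Bauschke--Combettes monograph.
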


We will prove \cref{thm:hess_free_converse} by reducing it to the case $\cY = \Re$ through the following lemma.

\begin{lemma}\label{lem:phi_derivative_lipschitz}
  Let $\cX$ and $\cY$ be real Hilbert and Banach spaces, respectively.
  Let $F\colon \cX \to \cY$ be a continuous function.
  Let $L > 0$ and suppose that \cref{eq:grad_free} holds for any $x_1,\ldots, x_n \in \cX$ and $\lambda_1,\ldots, \lambda_n \geq 0$ such that $\sum _{i=1}^n \lambda_i = 1$.
  For each $y^* \in \cY^*$, define the slice $\phi_{y^*} \colon \cX \to \Re$ by $\phi_{y^*} \coloneqq y^* \circ F$, i.e.,
  \begin{align}\label{eq:def_phi}
    \phi_{y^*}(x) = y^*(F(x)) \quad \forall x \in \cX.
  \end{align}
  Then, for each $y^* \in \cY^*$ with $\norm*{y^*} \leq 1$, the function $\phi_{y^*}$ is Fr\'echet differentiable everywhere and its gradient $\nabla \phi_{y^*} \colon \cX \to \cX$ is $L$-Lipschitz continuous.
\end{lemma}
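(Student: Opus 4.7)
The plan is to scalarize the vector-valued hypothesis via duality: for each $y^* \in \cY^*$ with $\norm{y^*} \leq 1$, applying $y^*$ to both sides of \eqref{eq:grad_free} and using $\abs{y^*(v)} \leq \norm{v}$ yields the real-valued analogue
\[
  \abs*{\phi_{y^*}\!\left(\sum_{i=1}^n \lambda_i x_i\right) - \sum_{i=1}^n \lambda_i \phi_{y^*}(x_i)} \leq \frac{L}{2} \sum_{1 \leq i < j \leq n} \lambda_i \lambda_j \norm{x_i - x_j}^2.
\]
Writing $\phi \coloneqq \phi_{y^*}$, this reduces the problem to real-valued convex analysis on a Hilbert space, which is the natural habitat of \cref{theo:bh}.

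Next, I would specialize to $n = 2$ and invoke the Hilbert-space identity
\[
  \lambda (1-\lambda) \norm{x-y}^2 = \lambda \norm{x}^2 + (1-\lambda)\norm{y}^2 - \norm{\lambda x + (1-\lambda)y}^2,
\]
which rewrites the two-point scalar inequality as a pair of midpoint-convexity statements: both $\phi + \tfrac{L}{2}\norm{\cdot}^2$ and $\tfrac{L}{2}\norm{\cdot}^2 - \phi$ are midpoint convex. Continuity of $\phi$---immediate since $F$ is continuous and $y^*$ is continuous linear---then upgrades each to an honest continuous convex function on all of $\cX$.

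Now I would apply \cref{theo:bh} to $g \coloneqq \phi + \tfrac{L}{2}\norm{\cdot}^2$, which is proper, continuous, and convex. The key observation is that $L\norm{\cdot}^2 - g = \tfrac{L}{2}\norm{\cdot}^2 - \phi$, which is convex by the previous step. Baillon--Haddad with $\beta = 2L$ therefore gives that $g$ is Fr\'echet differentiable on $\cX$ with $\tfrac{1}{2L}$-cocoercive gradient $\nabla g$. Consequently, $\phi = g - \tfrac{L}{2}\norm{\cdot}^2$ is Fr\'echet differentiable and $\nabla \phi = \nabla g - L\cdot \mathrm{id}_\cX$.

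The subtlest step---where I expect the main care to be required---is extracting the sharp Lipschitz constant $L$ rather than the slacker $2L$ or $3L$ one would get from a triangle inequality applied to the $2L$-Lipschitz $\nabla g$. This is precisely where cocoercivity, which is strictly stronger than Lipschitz continuity, earns its keep. Writing $u \coloneqq x-y$ and $v \coloneqq \nabla g(x) - \nabla g(y)$, cocoercivity reads $2L \langle v, u \rangle \geq \norm{v}^2$, and direct expansion
\[
  \norm{\nabla \phi(x) - \nabla \phi(y)}^2 = \norm{v - Lu}^2 = \norm{v}^2 - 2L \langle v, u \rangle + L^2 \norm{u}^2 \leq L^2 \norm{u}^2
\]
delivers the desired $L$-Lipschitz bound. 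This is exactly why the full strength of \cref{theo:bh} (cocoercivity) is needed and why the mere descent lemma would fall short.
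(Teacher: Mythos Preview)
Your proposal is correct and mirrors the paper's proof essentially step for step: scalarize via $y^*$, specialize \eqref{eq:grad_free} to $n=2$, use the Hilbert-space identity to see that both $\tfrac{L}{2}\norm{\cdot}^2 \pm \phi_{y^*}$ are convex, apply \cref{theo:bh} with $\beta=2L$ to $g=\tfrac{L}{2}\norm{\cdot}^2+\phi_{y^*}$, and extract the sharp constant $L$ from cocoercivity exactly as you do. One cosmetic remark: since the two-point inequality holds for every $\lambda\in[0,1]$, you get honest convexity directly and the midpoint-convexity-plus-continuity detour is unnecessary (though harmless).
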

\begin{proof}
  Inequality~\cref{eq:grad_free} gives for any $x, y \in \cX$ and $t \in [0, 1]$
  \begin{align}\label{eq:grad_free_special}
    \norm*{F(x + t (y - x)) - (1 - t) F(x) - t F(y)}
    \leq
    \frac{L}{2} t (1 - t) \norm{x - y}^2.
  \end{align}
  For every $x,y \in \cX$ and $t \in [0,1]$, we have
  \begin{alignat}{2}
    &
    \abs*{\phi_{y^*}(x+t(y-x)) - (1 - t) \phi_{y^*}(x) - t \phi_{y^*}(y)}\\
    ={}&
    \abs*{y^* \prn[\Big]{F(x+t(y-x)) - (1 - t) F(x) - t F(y) }}
    &\quad&\by{the definition \cref{eq:def_phi} of $\phi_{y^*}$}\\
    \leq{}&
    \norm*{F(x+t(y-x)) - (1 - t) F(x) - t F(y)}
    &\quad&\by{$\norm*{y^*} \leq 1$}\\
    \leq{}&
    \frac{L}{2} t (1 - t) \norm*{x - y}^2,
    &\quad&\by{\cref{eq:grad_free_special}}
  \end{alignat}
  which implies that 
  \begin{equation}\label{eq:component_free}
    -\frac{L}{2} t(1-t)\norm{x-y}^2
    \leq
    \phi_{y^*}(x+t(y-x)) - (1 - t) \phi_{y^*}(x) - t \phi_{y^*}(y)
    \leq
    \frac{L}{2} t(1-t)\norm{x-y}^2.
  \end{equation}
  The first and second inequalities in \eqref{eq:component_free}, together with the identity
  \begin{align}
    t(1-t)\norm{x-y}^2
    =
    (1-t)\norm{x}^2 + t\norm{y}^2 - \norm{x+t(y-x)}^2,
  \end{align}
  imply that $\frac{L}{2} \norm{\cdot}^2 - \phi_{y^*}$ and $\frac{L}{2} \norm{\cdot}^2 + \phi_{y^*}$ are convex, respectively.
  
  In particular, we have
  \[
  L \norm{\cdot}^2 - \underbrace{\prn*{\frac{L}{2} \norm{\cdot}^2 + \phi_{y^*}}}_{\text{convex}} = \underbrace{\frac{L}{2} \norm{\cdot}^2 - \phi_{y^*}}_{\text{convex}}.
  \]
  By \cref{theo:bh}, $\frac{L}{2} \norm{\cdot}^2 + \phi_{y^*}$ is Fr\'echet differentiable on $\cX$ and $\nabla (\frac{L}{2} \norm{\cdot}^2 + \phi_{y^*})$ is $\frac{1}{2L}$-cocoercive.
  In particular, $\phi_{y^*}$ is Fr\'echet differentiable on $X$.
  The cocoercivity of $\nabla (\frac{L}{2} \norm{\cdot}^2 + \phi_{y^*})$ implies that
  \begin{align}
    \inner*{L (x - y) + \nabla \phi_{y^*}(x) - \nabla \phi_{y^*}(y)}{x - y}
    \geq
    \frac{1}{2L} \norm*{L (x - y) + \nabla \phi_{y^*}(x) - \nabla \phi_{y^*}(y)}^2
    \quad
    \forall x,y \in \cX.
  \end{align}
  Expanding both sides and rearranging terms yields $\norm*{\nabla \phi_{y^*}(x) - \nabla \phi_{y^*}(y)}^2 \leq L^2 \norm*{x - y}^2$, which proves that $\nabla \phi_{y^*}$ is $L$-Lipschitz continuous.
\end{proof}

The biggest hurdle in proving \cref{thm:hess_free_converse} is establishing the Fr\'echet differentiability of $F$.
If $\cY$ were finite-dimensional, say, $\cY = \Re^d$, 
we would have $F = (F_1,\ldots, F_d)$ for certain functions $F_i \colon \cX \to \Re$.
These $F_i$ are, of course, the slices of $F$ defined by the usual unit vectors of $\Re^d$.
Then, \cref{lem:phi_derivative_lipschitz} would imply that all the $F_i$ have an $L$-Lipschitz derivative from which we would conclude the Fr\'echet differentiability of $F$ through elementary means.
The case where $\cY$ is infinite-dimensional is more delicate as it takes more effort to establish that $F$ is indeed Fr\'echet differentiable by analyzing its slices, as shown in the proof of the following lemma.

\begin{lemma}\label{lem:F_differentiability}
  Let $\cX$ and $\cY$ be real Banach spaces and suppose that $\cY$ is reflexive.
  Let $F\colon \cX \to \cY$ be a continuous function and let $L > 0$.
  For each $y^* \in \cY^*$ with $\norm*{y^*} \leq 1$, suppose that the slice $\phi_{y^*} \colon \cX \to \Re$ defined by \cref{eq:def_phi} is Fr\'echet differentiable and its derivative $\phi_{y^*}' \colon \cX \to \cX^*$ is $L$-Lipschitz continuous.
  Then, $F$ is Fr\'echet differentiable and its derivative $F' \colon \cX \to \cB(\cX, \cY)$ is $L$-Lipschitz continuous.
\end{lemma}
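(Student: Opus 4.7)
The strategy is to construct, for each $x \in \cX$, the derivative $F'(x) \in \cB(\cX,\cY)$ slicewise: declare $F'(x)(h)$ to be the element of $\cY \cong \cY^{**}$ whose value at $y^* \in \cY^*$ is $\phi_{y^*}'(x)(h)$. Reflexivity of $\cY$ is exactly what makes this well-defined as an element of $\cY$ rather than just of $\cY^{**}$.

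The concrete steps I would carry out are as follows. First, the hypothesis that $\phi_{y^*}'$ is $L$-Lipschitz for $\norm*{y^*} \le 1$ yields, via the standard fundamental-theorem-of-calculus argument, the Taylor-type inequality
\[
  \abs*{\phi_{y^*}(x+h) - \phi_{y^*}(x) - \phi_{y^*}'(x)(h)} \le \frac{L}{2}\norm{h}^2,
\]
and consequently $\abs*{\phi_{y^*}'(x)(h)} \le \norm*{F(x+h) - F(x)} + \frac{L}{2}\norm{h}^2$, uniformly in $\norm*{y^*}\le 1$. For each fixed $h$ the right-hand side is finite, so the family $\{\phi_{y^*}'(x)\}_{\norm*{y^*}\le 1} \subset \cX^*$ is pointwise bounded on the Banach space $\cX$. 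The uniform boundedness principle then provides $M_x \coloneqq \sup_{\norm*{y^*}\le 1}\norm*{\phi_{y^*}'(x)} < \infty$. Consequently the functional $\Psi_{x,h}\colon y^* \mapsto \phi_{y^*}'(x)(h)$ is linear on $\cY^*$ with $\abs*{\Psi_{x,h}(y^*)} \le M_x\norm{h}\norm*{y^*}$, hence $\Psi_{x,h} \in \cY^{**}$. By reflexivity it is represented by a unique $T_x(h) \in \cY$, and the linearity of $\phi_{y^*}'(x)$ in $h$ together with the bound $\norm*{T_x(h)} \le M_x\norm{h}$ yields $T_x \in \cB(\cX,\cY)$.

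To conclude, the canonical isometry $\cY \hookrightarrow \cY^{**}$ (via Hahn--Banach) gives $\norm*{z} = \sup_{\norm*{y^*}\le 1}\abs*{y^*(z)}$ for $z \in \cY$, so
\[
  \norm*{F(x+h) - F(x) - T_x(h)}
  = \sup_{\norm*{y^*}\le 1}\abs*{\phi_{y^*}(x+h) - \phi_{y^*}(x) - \phi_{y^*}'(x)(h)}
  \le \frac{L}{2}\norm{h}^2,
\]
establishing Fréchet differentiability of $F$ at $x$ with $F'(x) = T_x$. The same identity, combined with the slicewise Lipschitz hypothesis, gives
\[
  \norm*{F'(x) - F'(y)}
  = \sup_{\norm*{h}\le 1,\, \norm*{y^*}\le 1}\abs*{\phi_{y^*}'(x)(h) - \phi_{y^*}'(y)(h)}
  \le L\norm*{x - y},
\]
which is the required $L$-Lipschitz continuity of $F'$.

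The main obstacle is the boundedness of the candidate operator $T_x$. The map $(h, y^*) \mapsto \phi_{y^*}'(x)(h)$ is obviously separately linear, but the direct pointwise estimate $\norm*{F(x+h)-F(x)} + \frac{L}{2}\norm{h}^2$ is not a priori linear in $h$, since mere continuity of $F$ gives no quantitative control on its modulus near $x$. The uniform boundedness principle is the device that converts this awkward estimate into a genuine operator-norm bound on $T_x$, while reflexivity of $\cY$ is what places $\Psi_{x,h}$ back inside $\cY$ so that $T_x$ really takes values there; beyond these two ingredients the remaining verifications are routine.
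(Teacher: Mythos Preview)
Your proof is correct and follows essentially the same route as the paper's: slice via $y^*$, invoke reflexivity of $\cY$ to represent $\Psi_{x,h}$ by an element of $\cY$, and verify both the Fr\'echet-derivative property and the Lipschitz bound via the Hahn--Banach norm identity. The only minor variation is in the boundedness step: where you appeal to the uniform boundedness principle, the paper argues directly from the continuity of $F$ at $x$ (choosing $\delta>0$ with $\norm*{F(x+h)-F(x)}\le 1$ for $\norm{h}\le\delta$) to obtain the explicit bound $\norm*{\phi_{y^*}'(x)} \le \delta^{-1} + \tfrac{L}{2}\delta$ uniformly in $\norm*{y^*}\le 1$.
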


Note that, to make the statement more general, $\cX$ is not assumed to be a Hilbert space; rather, it is only assumed to be a Banach space, unlike in \cref{lem:phi_derivative_lipschitz}.
Therefore, we use the derivative $\phi_{y^*}' \colon \cX \to \cX^*$ in place of the gradient $\nabla \phi_{y^*} \colon \cX \to \cX$ in this lemma.

\begin{proof}[Proof of \cref{lem:F_differentiability}]
  By the Lipschitz continuity of $\phi_{y^*}'$, for every $x, h \in \cX$ and $y^* \in \cY^*$ with $\norm{y^*} \leq 1$, we have
  \begin{align}\label{eq:phi_smoothness}
    \abs[\Big]{
      \phi_{y^*}(x + h) - \phi_{y^*}(x) - \phi_{y^*}'(x) h
    }
    \leq
    \frac{L}{2} \norm*{h}^2.
  \end{align}

  The proof is divided into three parts:
  \begin{enumerate}
  \renewcommand\theenumi{(\arabic{enumi})} 
  \renewcommand{\labelenumi}{\rm (\arabic{enumi})}
    \item\label{bounded} 
    showing that for each $x \in \cX$, the map $y^* \mapsto \phi_{y^*}'(x)$ is a bounded linear map from $\cY^*$ to $\cX^*$,
    \item\label{derivative}
    constructing the Fr\'echet derivative of $F$ using the reflexivity of $Y$, and
    \item\label{Lipschitz}
    showing that the derivative is $L$-Lipschitz continuous.
  \end{enumerate}

  \paragraph{\ref{bounded}~Linearity and boundedness of $y^* \mapsto \phi_{y^*}'(x)$ for every $x \in X$.}
 Fix $x \in X$ arbitrarily.  The map $y^* \mapsto \phi_{y^*}(x)$ is linear by the definition \cref{eq:def_phi},  hence $y^* \mapsto \phi_{y^*}'(x)$ is a linear map between $\cY^*$ and $X^*$.
 
  Next, we show the boundedness. The continuity of $F$ at $x$ implies that there exists $\delta > 0$ such that for each $h \in \cX$ with $\norm{h} \leq \delta$ it holds
  \begin{align}\label{eq:F_continuity_bound}
    \norm*{F(x + h) - F(x)} \leq 1.
  \end{align}
Thus,  for each $y^* \in \cY^*$ with $\norm{y^*} \leq 1$, we have
  \begin{alignat}{2}
    \norm*{\phi_{y^*}'(x)}
    &=
    \frac{1}{\delta }\sup_{\norm*{h} \leq \delta} \abs*{\phi_{y^*}'(x) h}\\
    &\leq
    \frac{1}{\delta } \sup_{\norm*{h} \leq \delta} 
    \prn*{
      \abs*{\phi_{y^*}(x + h) - \phi_{y^*}(x)}
      + \frac{L}{2} \norm{h}^2
    }
    &\quad&\by{\cref{eq:phi_smoothness}}\\
    &\leq
    \frac{1}{\delta } \sup_{\norm*{h} \leq \delta} 
    \prn*{
      \norm*{F(x + h) - F(x)}
      + \frac{L}{2} \norm{h}^2
    }
    &\quad&\by{the definition \cref{eq:def_phi} of $\phi_{y^*}$\\and $\norm*{y^*} \leq 1$}\\
    &\leq
     \frac{1}{\delta } + \frac{L}{2} \delta,
    &\quad&\by{\cref{eq:F_continuity_bound}}
  \end{alignat}
  which proves the boundedness of $y^* \mapsto \phi_{y^*}'(x)$.

  \paragraph{\ref{derivative}~Constructing the Fr\'echet derivative of $F$.}
  We have shown that for each $x \in X$, the map $y^* \mapsto \phi_{y^*}'(x)$ is a bounded linear map between $Y^*$ and $X^*$.
  Therefore,  for each $x, h \in \cX$, the map $y^* \mapsto \phi_{y^*}'(x) h$ is an element of $Y^{**}$.
  The reflexivity of $\cY$ implies that for each $x, h \in \cX$, there exists a unique $f_x(h) \in \cY$ such that
  \begin{align}\label{eq:reflexivity}
    y^*(f_x(h)) = \phi_{y^*}'(x) h
    \quad \forall y^* \in \cY^*.
  \end{align}

  We will show that $f_x \colon \cX \to \cY$ (i.e., $h \mapsto f_x(h)$) is the Fr\'echet derivative of $F$ at $x$.
  Let $\cY^*_1 \coloneqq \Set{y^* \in \cY^*}{\norm*{y^*} \leq 1}$.
  The Hahn--Banach theorem implies that for any $y \in \cY$ we have
  \begin{align}\label{eq:hahn-banach}
    \norm*{y} = \sup_{y^* \in \cY^*_1} y^*(y),
  \end{align}
  see \cite[Corollary~6.7]{Co07}. Thus, for each $x, h \in \cX$, we have
  \begin{alignat}{2}
    \norm*{F(x + h) - F(x) - f_x(h)}
    ={}&
    \sup_{y^* \in \cY^*_1} y^* \prn[\Big]{F(x + h) - F(x) - f_x(h)}
    &\quad&\by{\cref{eq:hahn-banach}}\\
    ={}&
    \sup_{y^* \in \cY^*_1} \prn[\Big]{\phi_{y^*}(x + h) - \phi_{y^*}(x) - \phi_{y^*}'(x) h}
    &\quad&\by{\cref{eq:def_phi,eq:reflexivity}}\\
    \leq{}&
    \frac{L}{2} \norm*{h}^2
    = o(\norm{h}).
    &\quad&\by{\cref{eq:phi_smoothness}}
  \end{alignat}

  It remains to show that for each $x \in \cX$ the map $h \mapsto f_x(h)$ is linear and bounded.  Fix $x \in X$ arbitrarily.
  \Cref{eq:reflexivity} implies that for each $h_1, h_2 \in \cX$ and $y^* \in \cY^*$
  \begin{align}
    y^* \prn[\Big]{f_x(h_1 + h_2) - f_x(h_1) - f_x(h_2)}
    =
    \phi_{y^*}'(x) (h_1 + h_2)
    - \phi_{y^*}'(x) h_1
    - \phi_{y^*}'(x) h_2
    =
    0,
  \end{align}
  and hence $f_x(h_1 + h_2) = f_x(h_1) + f_x(h_2)$.
  Similarly we have $f_x(\alpha h) = \alpha f_x(h)$ for each $\alpha \in \Re$.
  Therefore, the map $h \mapsto f_x(h)$ is linear.
  We next show the boundedness.
  For each $h \in \cX$, we have
  \begin{alignat}{2}
    \norm*{f_x(h)}
    &=
    \sup_{y^* \in \cY^*_1} y^*(f_x(h))
    &\quad&\by{\cref{eq:hahn-banach}}\\
    &=
    \sup_{y^* \in \cY^*_1} \phi_{y^*}'(x) h
    &\quad&\by{\cref{eq:reflexivity}}\\
    &\leq
    \norm{h} \sup_{y^* \in \cY^*_1} \norm{\phi_{y^*}'(x)}.
  \end{alignat}
  Since the boundedness of $y^* \mapsto \phi_{y^*}'(x)$ was shown in the first part of this proof, the map $h \mapsto f_x(h)$ is bounded as well.
  
  \paragraph{\ref{Lipschitz}~Lipschitz continuity of the derivative.}
  The Lipschitz continuity of $x \mapsto f_x$ follows from the Lipschitz continuity of $\phi_{y^*}'$. Indeed, for each $x,y \in X$ it holds
  \begin{alignat}{2}
    \norm*{f_x - f_y}
    &=
    \sup_{\norm*{h} \leq 1,\ y^* \in \cY^*_1} y^* \prn*{f_x(h) - f_y(h)}
    &\quad&\by{\cref{eq:hahn-banach}}\\
    &=
    \sup_{\norm*{h} \leq 1,\ y^* \in \cY^*_1} \prn*{\phi_{y^*}'(x) - \phi_{y^*}'(y)} h
    &\quad&\by{\cref{eq:reflexivity}}\\
    &=
    \sup_{y^* \in \cY^*_1} \norm*{\phi_{y^*}'(x) - \phi_{y^*}'(y)}\\
    &\leq
    L \norm*{x - y},
    &\quad&\by{the Lipschitz continuity of $\phi_{y^*}'$}
  \end{alignat}
  which completes the proof.
\end{proof}

\Cref{thm:hess_free_converse} directly follows from \cref{lem:phi_derivative_lipschitz,lem:F_differentiability} as shown below.
\begin{proof}[Proof of $\eqref{item:grad_free} \implies \eqref{item:lip_F_derivative}$ in \cref{thm:hess_free_converse}]
  Let $F \colon \cX \to \cY$ be a continuous function satisfying \cref{eq:grad_free}.
  By \cref{lem:phi_derivative_lipschitz}, for each $y^* \in \cY^*$ with $\norm*{y^*} \leq 1$, the slice $\phi_{y^*}$ has an $L$-Lipschitz continuous gradient $\nabla \phi_{y^*}$.
  By \cref{lem:F_differentiability}, $F$ also has an $L$-Lipschitz continuous derivative $F'$.
\end{proof}

\section{Final remarks}
As usual, it may be interesting to see if \Cref{thm:hess_free_converse} can be further extended to more general settings. 
This would require some extension of Theorem~\ref{theo:bh}, which may be nontrivial, see \cite{WW22} for some discussion along these lines. 
Another source of difficulty is the fact that $\norm{\cdot}^2$ is not ensured to be differentiable in an arbitrary normed vector space.

{\small
	\section*{Acknowledgements}
	This work was initiated during the MATRIX workshop ``Splitting Algorithms -- Advances, Challenges, and Opportunities''. 
	The authors would like to thank MATRIX and the organizers of the workshop for the environment that allowed this project to flourish.
		
	This work was also partially supported by JSPS KAKENHI (24K23853) and JST CREST (JPMJCR24Q2).
}

\bibliographystyle{abbrvurl}
\bibliography{bib_plain}
\end{document}